\long\def\delete#1{}
\definecolor{Blue}{rgb}{0,0,1}
\definecolor{Red}{rgb}{1,0,0}
\definecolor{DarkGreen}{rgb}{0,0.6,0}
\definecolor{DarkYellow}{rgb}{1,1,0.2}
\definecolor{DarkPurple}{rgb}{.6,0,1}
\def\ma{\mathscr{A}}
\def\mb{\mathscr{B}}
\def\mf{\mathscr{F}}
\def\mh{\mathscr{H}}
\def\mt{\mathscr{T}}
\def\bs{\setminus}
\def\ff{\mathbb{F}_q}
\def\ge{\geqslant}
\def\le{\leqslant}
\def\b{\brack}
\def\ro{\romannumeral}
\numberwithin{equation}{section}
\newtheorem{thm}{Theorem}[section]
\newtheorem{lem}[thm]{Lemma}
\begin{document}
	\setcounter{page}{1}
	\renewcommand{\thefootnote}{}
	\newcommand{\remark}{\vspace{2ex}\noindent{\bf Remark.\quad}}
\renewcommand{\abovewithdelims}[2]{%
		\genfrac{[}{]}{0pt}{}{#1}{#2}}

	
	\def\qed{\hfill$\Box$\vspace{11pt}}
	
	\title {\bf  More on $r$-cross $t$-intersecting families for vector spaces}

	\author[a]{Tian Yao\thanks{E-mail: \texttt{tyao@hist.edu.cn}}}
	\author[b]{Dehai Liu\thanks{E-mail: \texttt{liudehai@mail.bnu.edu.cn}}}
	\author[b]{Kaishun Wang\thanks{E-mail: \texttt{wangks@bnu.edu.cn}}}
	\affil[a]{School of Mathematical Sciences, Henan Institute of Science and Technology, Xinxiang 453003, China}
	\affil[b]{Laboratory of Mathematics and Complex Systems (Ministry of Education), School of
		Mathematical Sciences, Beijing Normal University, Beijing 100875, China}

	\date{}
	
	\openup 0.5\jot
	\maketitle

	\begin{abstract}
		Let $V$ be a finite dimensional vector space over a finite field. 
		Suppose that $\mf_1$, $\mf_2$, \dots, $\mf_r$ are $r$-cross $t$-intersecting families of $k$-subspaces of $V$.  
		In this paper, we determine the extremal structure when $\prod_{i=1}^r|\mf_i|$ is maximum under the condition that  $\dim(\bigcap_{F\in\mf_i}F)<t$ for each $i$. 
		
		\vspace{2mm}
		
		\noindent{\bf Key words}\ \ $r$-cross $t$-intersecting families; vector spaces; Hilton-Milner theorem.
		
		\
		
		\noindent{\bf AMS classification:} \   05D05, 05A30
		
		
	\end{abstract}
	
	\section{Introduction}

		Intersection problems originate from the famous Erd\H{o}s-Ko-Rado theorem \cite{EKR}. 
		For decades, these problems have attracted widespread attention. We refer readers to \cite{SHMT,SEKRT,CROOSBORG2016,cao-rcrossset,FW1,FW2,SHM,tokushige-set-cross-product,SCROSSSUM2013,Wn,kong} for rich results of sets. 
		Intersection problems are also studied on some other mathematical objects, for example, vector spaces.

	Let  $V$  be an $n$-dimensional vector space over the finite field $\ff$.
	  Denote the family of all $k$-subspaces of $V$ by ${V\b k}$. 
A family $\mf\subseteq{V\b k}$ is called \emph{$t$-intersecting} if $\dim(F_1\cap F_2)\ge t$ for any $F_1,F_2\in\mf$. 
The Erd\H{o}s-Ko-Rado Theorem for vector spaces \cite{VEKR2,VEKR1,VEKR3} shows that each member of a maximum-sized $t$-intersecting subfamily of ${V\b k}$ contains a fixed $t$-subspace of $V$ when $n>2k$. 
In \cite{VHM1,VHM2,VHMNEW,VHMNEW2}, the Hilton-Milner theorem for vector spaces was proved, providing the structure of the second largest maximal $t$-intersecting subfamilies of ${V\b k}$.

Let $r\ge2$. 
Suppose that $\mf_1,\mf_2,\dots,\mf_r$ are subfamilies of ${V\b k}$ with $\dim(\bigcap_{i=1}^rF_i)\ge t$ for any  $F_i\in\mf_i$, $1\le i\le r$. 
We say they are \emph{$r$-cross $t$-intersecting families}. 
Observe that $r$-cross $t$-intersecting families  generalize $t$-intersecting families. 
Indeed, if $r=2$ and $\mf_1=\mf_2=\mf$, then $\mf$ is $t$-intersecting. 
A natural problem is maximizing $\sum_{i=1}^r|\mf_i|$. 
Wang and Zhang \cite{SCROSSSUM2013} completely determined the extremal structure when $r=2$. 
Another natural problem focuses on $\prod_{i=1}^r|\mf_i|$. 
An Erd\H{o}s-Ko-Rado type theorem \cite{CAO} states that, for large $n$,  $\prod_{i=1}^r|\mf_i|$ is maximum implies that $\dim(\bigcap_{i=1}^r\bigcap_{F\in\mf_i}F)=t$.  
	In \cite{CAO}, assuming $r=2$ and $\dim(\bigcap_{i=1}^2\bigcap_{F\in\mf_i}F)<t$, the authors also described $\mf_1$ and $\mf_2$ when $|\mf_1||\mf_2|$ takes the maximum value.

		For $T\in{V\b t}$, $L,M\in{V\b k+1}$ and $Z\in{V\b t+2}$ with $T\subseteq L\cap M$ and $\dim(L\cap M)\ge t+2$, write
	\begin{equation*}
		\begin{aligned}
			\mh_1(T,L,M)&=\left\{F\in{V\b k}: T\subseteq F,\ \dim(F\cap L)\ge t+1\right\}\cup\left\{F\in{M\b k}: T\not\subseteq F\right\},\\
			\mh_2(Z)&=\left\{F\in{V\b k}:\dim(F\cap Z)\ge t+1\right\}.
		\end{aligned}
	\end{equation*}
	Observe that if $\mf_1=\mh_1(T,L,M)$ and $\mf_2=\mh_1(T,M,L)$, or $\mf_1=\mf_2=\mh_2(Z)$, then $\mf_1$ and $\mf_2$ are $2$-cross $t$-initersecting families with $\dim(\bigcap_{F\in\mf_i}F)<t$, $i=1,2$.

	In this paper,  we characterize the structure of $\mf_1$, $\mf_2$, \dots, $\mf_r$ when $\prod_{i=1}^r|\mf_i|$ is maximum under the condition that $\dim(\bigcap_{F\in\mf_i}F)<t$ for each $i$.

\begin{thm}\label{main1}
	Let $n$, $k$ and $t$ be positive integers with $k\ge t+1$ and $n\ge2k+2t+5$. 
	Assume that  $\mf_1,\mf_2\subseteq{V\b k}$ are $2$-cross $t$-intersecting families with $\dim(\bigcap_{F\in\mf_i}F)<t$ for each $i$. If $|\mf_1||\mf_2|$ takes the maximum value, then one of the following holds.
	\begin{itemize}
		\item[\rm{(\ro1)}] $\mf_1=\mh_1(T,L,M)$ and $\mf_2=\mh_1(T,M,L)$ for some $T\in{V\b t}$ and $L,M\in{V\b k+1}$ with $T\subseteq L\cap M$ and $\dim(L\cap M)\ge t+2$.
		\item[\rm{(\ro2)}] $\mf_1=\mf_2=\mh_2(Z)$ for some $Z\in{V\b t+2}$.
	\end{itemize}
	Morover, if $k>2t+1$, then $(\ro1)$ holds; if $k\le2t+1$, then $(\ro2)$ holds, or $(k,t)=(3,1)$ and $(\ro1)$ holds.
\end{thm}


\begin{thm}\label{main2}
	Let $n$, $r$, $k$ and $t$ be positive integers with $k-t+1\ge r\ge3$ and $n\ge2k+2t+2r+1$. 
	Assume that $\mf_1,\mf_2,\dots,\mf_r\subseteq{V\b k}$ are $r$-cross $t$-intersecting families with $\dim(\bigcap_{F\in\mf_i}F)<t$ for each $i$, and $\prod_{i=1}^r|\mf_i|$ takes the maximum value.
	\begin{itemize}
		\item[\rm{(\ro1)}] If $k>2t+2r-3$, then $\mf_i=\mh_1(T,M,M)$ $(i=1,2,\dots,r)$ for some $T\in{V\b t+r-2}$ and $M\in{V\b k+1}$ with $T\subseteq M$.
		\item[\rm{(\ro2)}] If $k\le2t+2r-3$, then $\mf_i=\mh_2(Z)$ $(i=1,2,\dots,r)$ for some $Z\in{V\b t+r}$.
	\end{itemize}
\end{thm}

	The first theorem can be seen as a product version of the Hilton-Milner theorem for vector spaces. 
The second implies the structure of maximum-sized {non-trivial $r$-wise $t$-intersecting families} in \cite{CAO}.

\section{Inequalities concerning $2$-cross $t$-intersecting families}

In this section, we show some inequalities concerning $2$-cross $t$-intersecting families in preparation for the proof of our main results. 
The \emph{Gaussian binomial coefficient} is defined by
$${n\b k}:=\prod_{0\le i<k}\dfrac{q^{n-i}-1}{q^{k-i}-1}.$$
We begin with a lemma about it, which can be easily checked.

\begin{lem}\label{gaosifangsuo}
	Let $m$ and $i$ be positive integers with $i<m$. Then 
 $$q^{m-i}<\frac{q^m-1}{q^i-1}<q^{m-i+1},\quad q^{i(m-i)}<{m\b i}<q^{i(m-i+1)}.$$
\end{lem}

Let $T$ be a subspace of $V$ and $\mf\subseteq{V\b k}$. 
From now on, write
$\mf_T=\{F\in\mf: T\subseteq F\}$, and ``cross $t$-intersecting" instead of ``$2$-cross $t$-intersecting". 
If $k\ge t$ and $\dim(T\cap F)\ge t$ for each $F\in\mf$, then $T$ is called a \emph{$t$-cover} of $\mf$.  
Define the \emph{$t$-covering number} $\tau_t(\mf)$ of $\mf$ as the minimum dimension of a $t$-cover of $\mf$. 
Observe that $\dim(\bigcap_{F\in\mf}F)<t$  if and only if $t+1\le\tau_t(\mf)\le n$.  

\begin{lem}\label{t+1}
	Let $n$, $k$ and $t$ be positive integers with $k\ge t+1$ and  $n\ge2k+2t+5$. 
	Suppose that $\mf_1,\mf_2\subseteq{V\b k}$ are cross $t$-intersecting families with $\tau_t(\mf_1)=\tau_t(\mf_2)=t+1$, and $\mb$ is the set of  all members of $\mf_2$ not containing any $(t+1)$-dimensional $t$-cover of $\mf_1$. Then $$|\mb|\le q^{k-3t-1}{n-t-1\b k-t-1}.$$
\end{lem}
\begin{proof}
		We may assume that $\mb\neq\emptyset$, and  $S\in{V\b t+1}$ is a $t$-cover of $\mf_2$. 
Observe that 
\begin{equation}\label{aa1}
	\mb\subseteq\bigcup_{U\in{S\b t}}\mb_U.
\end{equation}
Let $U\in{S\b t}$ with $\mb_U\neq\emptyset$. 
Since $\tau_t(\mf_1)=t+1$, there exists $F_1\in\mf_1$ with $\dim(U\cap F_1)\le t-1$. 
Write $\alpha=\dim(U\cap F_1)$ and $H=U+F_1$. 
For each $G\in\mb_U$, by $\dim(G\cap F_1)\ge t$, we have $\dim(H\cap G)\ge2t-\alpha\ge t+1$. 
Hence
\begin{equation}\label{aa2}
	\mb_U\subseteq\bigcup_{R\in{H\b t+1},\ U\subseteq R}\mb_R.
\end{equation}
Let $R\in{H\b t+1}$ with $U\subseteq R$ and $\mb_R\neq\emptyset$. 
By the definition of $\mb$, $R$ is not a $t$-cover of $\mf_1$. 
Then there exists $F_2\in\mf_1$ with $\dim(R\cap F_2)\le t-1$. 
Note that $F_2$ is a $t$-cover of $\mb_R$.
By \cite[Lemma 2.4]{CAO}, there exists a $(2t+1-\dim(R\cap F_2))$-subspace $R'$ of $V$ such that
\begin{equation*}\label{aa4}
|\mb_R|\le{k-\dim(R\cap F_2)\b t-\dim(R\cap F_2)}|\mf_{R'}|
\le{k-t+1\b1}^{t-\dim(R\cap F_2)}{n-2t-1+\dim(R\cap F_2)\b k-2t-1+\dim(R\cap F_2)}.
\end{equation*}
For integer $a\le k-1$, since $n\ge2k-t+1$, we have
$$	\dfrac{{n-a\b k-a}}{{n-a-1\b k-a-1}}=\dfrac{q^{n-a}-1}{q^{k-a}-1}\ge q^{n-k}\ge q^{k-t+1}\ge{k-t+1\b1}.$$
Note that $t-\dim(R\cap F_2)\ge1$. We derive
$$|\mb_R|\le{k-t+1\b1}{n-t-2\b k-t-2}.$$
Then by (\ref{aa1}) and (\ref{aa2}), we obtain
\begin{equation}\label{aa3}
	|\mb|\le{t+1\b1}{k-\alpha\b1}{k-t+1\b1}{n-t-2\b k-t-2}.
\end{equation}
Let $S'$ be a $t$-cover of $\mf_1$ with dimension $t+1$. 
We have $\dim(S\cap S')\ge t$ by \cite[Lemma 2.7]{CAO}.
Then $\dim(U\cap S')\ge t-1$. 
Hence 
$$t\le\dim(S'\cap F_1)\le\dim((S'\cap U)\cap F_1)+\dim S'-\dim(S'\cap U)\le\alpha+2,$$
which implies that $\alpha\ge t-2$. 
Then by $n\ge2k+2t+5$, (\ref{aa3}) and Lemma \ref{gaosifangsuo}, we have
$$\dfrac{|\mb|}{{n-t-1\b k-t-1}}\le\dfrac{q^{k-t-1}-1}{q^{n-t-1}-1}{t+1\b1}{k-t+1\b1}{k-t+2\b1}\le q^{k-3t-1},$$
as desired.	
\end{proof}

For $T\in{V\b t}$, $L,M\in{V\b k+1}$ and $Z\in{V\b t+2}$ with $T\subseteq M\cap L$ and $\dim(M\cap L)\ge t+2$, note that $|\mh_1(T,L,M)|$ and $|\mh_2(Z)|$ are independent of the choice of $M$, $L$, $T$ and $Z$. Write
$$h_1(n,k,t)=|\mh_1(T,L,M)|,\quad h_2(n,k,t)=|\mh_2(Z)|.$$
\begin{lem}\label{bidaxiao}
	Let $n$, $k$ and $t$ be positive integers with  $k\ge t+2$ and $n\ge2k+2t+5$. 
	Then $$h_1(n,k,t)>\dfrac{383}{384}{k-t+1\b1}{n-t-1\b k-t-1},\quad h_2(n,k,t)>\dfrac{1023}{1024}{t+2\b1}{n-t-1\b k-t-1}.$$
\end{lem}
\begin{proof}
	By \cite[Lemma 2.4]{VHMNEW} and 
		$h_2(n,k,t)={t+2\b1}{n-t-1\b k-t-1}-q{t+1\b1}{n-t-2\b k-t-2}$, we have
	\begin{equation*}\label{linshi}
		\begin{aligned}
			\dfrac{h_1(n,k,t)}{{k-t+1\b1}{n-t-1\b k-t-1}}&>1-\dfrac{1}{q^{n-2k+t-1}(q^2-1)}\ge1-\dfrac{1}{q^{3t+4}(q^2-1)}\ge\dfrac{383}{384},\\
			\dfrac{h_2(n,k,t)}{{t+2\b1}{n-t-1\b k-t-1}}&>1-\dfrac{q^{k-t-1}-1}{q^{n-t-1}-1}\ge1-\dfrac{1}{q^{n-k}}\ge1-\dfrac{1}{q^{3t+7}}\ge\dfrac{1023}{1024},
		\end{aligned}
	\end{equation*}
	as desired.
\end{proof}

\begin{lem}\label{t+1,t+2}
	Let $n$, $k$ and $t$ be positive integers with $k\ge t+2$ and $n\ge2k+2t+5$. Suppose $\mf_1,\mf_2\subseteq{V\b k}$ are cross $t$-intersecting families with $\tau_t(\mf_1)\ge\tau_t(\mf_2)\ge t+1$ and $(\tau_t(\mf_1),\tau_t(\mf_2))\neq(t+1,t+1)$. 
	Then  $|\mf_1||\mf_2|<(h_1(n,k,t))^2$. 

\end{lem}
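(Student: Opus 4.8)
The plan is to combine the single–family bound of Lemma~\ref{v-yibanshangjie} with the monotonicity of Lemma~\ref{v-dijian}, thereby reducing the whole statement to one explicit inequality between $h$ and $f$.

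First I would record two preliminary facts about $a:=\tau_t(\mf)$ and $b:=\tau_t(\mg)$. Since $\tau_t\ge t+1$ forces both families to be non-empty, and since any $k$-subspace $F\in\mf$ meets every $G\in\mg$ in dimension at least $t$, each such $F$ is a $t$-cover of $\mg$; hence $b\le k$, and symmetrically $a\le k$. Together with the hypotheses this gives $t+1\le a,b\le k$ and $(a,b)\ne(t+1,t+1)$, which already forces $k\ge t+2$. Applying Lemma~\ref{v-yibanshangjie} to the ordered pair $(\mf,\mg)$ and then to $(\mg,\mf)$ and multiplying the two bounds, the exponents of ${k-t+1\b1}$ add and I obtain
$$|\mf||\mg|\le{a\b t}{b\b t}{k-t+1\b1}^{a+b-2t}{n-a\b k-a}{n-b\b k-b}=h(n,k,t,a)\,h(n,k,t,b).$$

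Next I would reduce to the extremal index pair. By Lemma~\ref{v-dijian}, $h(n,k,t,x)$ is non-increasing on $\{t,\dots,k\}$, and since (after relabelling so that $a\le b$) the excluded case forces $b\ge t+2$, I get $h(n,k,t,a)\le h(n,k,t,t+1)$ and $h(n,k,t,b)\le h(n,k,t,t+2)$, so that
$$|\mf||\mg|\le h(n,k,t,t+1)\,h(n,k,t,t+2);$$
the second factor is legitimate precisely because $k\ge t+2$. It then suffices to prove $h(n,k,t,t+1)\,h(n,k,t,t+2)<f(n,k,t)$. For this crux I would expand both sides, cancel a common factor ${n-t-1\b k-t-1}^2$ using ${n-t-1\b k-t-1}={n-t-2\b k-t-2}\cdot\frac{q^{n-t-1}-1}{q^{k-t-1}-1}$, and divide by ${k-t+1\b1}^2$; the claim becomes
$${t+1\b t}{t+2\b t}{k-t+1\b1}\cdot\frac{q^{k-t-1}-1}{q^{n-t-1}-1}<1-\frac{q^{-4}}{{k-t+1\b1}^2}.$$
Using Lemma~\ref{gaosifangsuo} to bound each left-hand factor by a power of $q$ (namely ${t+1\b t}<q^{t+1}$, ${t+2\b t}<q^{2t+2}$, ${k-t+1\b1}<q^{k-t+1}$ and $\frac{q^{k-t-1}-1}{q^{n-t-1}-1}<q^{k-n}$), the left side is at most $q^{2t+2k+4-n}$; since $t\le k-1$ and $n\ge4k+6$ the exponent is at most $-4$, so the left side is below $q^{-4}$. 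As ${k-t+1\b1}\ge q+1$, the right side exceeds $1-q^{-4}$, and $q^{-4}<1-q^{-4}$ for every prime power $q$, which closes the inequality.

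I expect the main obstacle to be this final estimate: it is not conceptually deep but it is delicate, because the whole inequality succeeds only thanks to the correction term $-q^{-4}$ built into $f(n,k,t)$, which is exactly large enough to absorb the contribution $q^{2t+2k+4-n}\le q^{-4}$ of the index-$(t+1,t+2)$ product. I would also take care to confirm that $(t+1,t+2)$ really is the worst index pair, so that the monotonicity of $h$ is applied in the correct direction, and that the hypotheses genuinely force $k\ge t+2$, guaranteeing that every Gaussian coefficient above is well defined.
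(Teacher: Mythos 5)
Your proof is correct and follows essentially the same route as the paper's: bound $|\mf||\mg|\le h(n,k,t,\tau_t(\mf))\,h(n,k,t,\tau_t(\mg))$ by applying Lemma \ref{v-yibanshangjie} in both directions, reduce to the extremal pair $(t+1,t+2)$ via the monotonicity in Lemma \ref{v-dijian}, and close with the power-of-$q$ estimate $q^{2t+2k+4-n}\le q^{-4}$ from Lemma \ref{gaosifangsuo} played against the $-q^{-4}$ slack built into $f(n,k,t)$. Your explicit verifications that $\tau_t(\mf),\tau_t(\mg)\le k$ and that $k\ge t+2$ are details the paper leaves implicit, but the argument is the same.
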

\begin{proof}
		We first show
	\begin{equation}\label{fg}
|\mf_1||\mf_2|\le{t+1\b1}{t+2\b2}{k-t+1\b1}^3{n-t-1\b k-t-1}{n-t-2\b k-t-2}.
	\end{equation} 
	If $\tau_t(\mf_2)=t+1$, then $\tau_t(\mf_1)\ge t+2$. 
	By \cite[Lemma 2.5]{CAO}, we have  
	\begin{equation*}
		\begin{aligned}
		\dfrac{|\mf_1||\mf_2|}{{t+1\b1}{k-t+1\b1}{n-t-1\b k-t-1}}&\le{\tau_t(\mf_1)\b t}{k\b 1}^{\tau_t(\mf_1)-t-2}{k-t+1\b1}^2{n-\tau_t(\mf_1)\b k-\tau_t(\mf_1)}.
		\end{aligned}
	\end{equation*}
	This together with \cite[Lemma 2.3 (\ro2)]{CAO} produces (\ref{fg}). 
    If $\tau_t(\mf_2)\ge t+2$, then $\tau_t(\mf_1)\ge t+2$. We get 
	$$|\mf_1||\mf_2|\le\prod_{i=1}^2{\tau_t(\mf_{i})\b t}{k\b1}^{\tau_t(\mf_{i})-t-2}{k-t+1\b1}^2{n-\tau_t(\mf_{i})\b k-\tau_t(\mf_{i})}$$
	from \cite[Lemma 2.5]{CAO}. This together with \cite[Lemma 2.3 (\ro2)]{CAO} also produces (\ref{fg}).

      By  $n\ge2k+2t+5$, (\ref{fg}) and Lemma \ref{gaosifangsuo}, we have 
      \begin{equation*}
      	\begin{aligned}
      		\dfrac{|\mf_1||\mf_2|}{{k-t+1\b1}^2{n-t-1\b k-t-1}^2}\le\dfrac{q^{k-t-1}-1}{q^{n-t-1}-1}{t+1\b1}{t+2\b2}{k-t+1\b1}
      		\le q^{2k+2t+4-n}<\dfrac{383^2}{384^2}.
      	\end{aligned}
      \end{equation*}
      This together with Lemma \ref{bidaxiao} yields the desired result.
     \end{proof}

\section{Proof of main results}

In this section, we first show Theorem \ref{main1}, and then based on this result prove Theorem \ref{main2}.

\subsection{Proof of Theorem \ref{main1}}

	We may suppose that $\mf_1,\mf_2$ are non-empty cross $t$-intersecting families of $k$-subspaces of $V$  with $\tau_t(\mf_i)\ge t+1$ for each $i$, and $|\mf_1||\mf_2|$ takes the maximum value. Observe that 
	\begin{equation}\label{md}
	|\mf_1||\mf_2|\ge\max_{i\in\{1,2\}}(h_i(n,k,t))^2.
\end{equation}
We claim that $\tau_t(\mf_1)=\tau_t(\mf_2)=t+1$. Indeed, if $(\tau_t(\mf_1),\tau_t(\mf_2))\neq(t+1,t+1)$, then $k\ge t+2$. 
By Lemma \ref{t+1,t+2}, we have $|\mf_1||\mf_2|<(h_1(n,k,t))^2$. 
		This contradicts (\ref{md}).

		Denote the set of all $t$-covers of $\mf_i$ with dimension $t+1$ by $\mt_i$ for each $i$. 
Write 
\begin{equation}\label{fj}
	\begin{aligned}
		\ma_1&=\left\{F\in\mf_1: \exists S\in\mt_2,\ S\subseteq F\right\},\quad\mb_1=\{F\in\mf_1: \forall S\in\mt_2,\ S\not\subseteq F\}.\\
		\ma_2&=\left\{F\in\mf_2: \exists S\in\mt_1,\ S\subseteq F\right\},\quad\mb_2=\{F\in\mf_2: \forall S\in\mt_1,\ S\not\subseteq F\}.
	\end{aligned}
\end{equation}
Observe that $\mf_1=\ma_1\cup\mb_1$ and $\mf_2=\ma_2\cup\mb_2$. 
On the other hand, by \cite[Lemma 2.7]{CAO}, $\mt_1$ and $\mt_2$ are cross $t$-intersecting, 
implying that $t\le\tau_t(\mt_i)\le t+1$ for each $i$. 
In the following, we investigate $\mf_1$ and $\mf_2$ in two cases.

	\medskip
		\noindent{{\bf Case 1.} $(\tau_t(\mt_1),\tau_t(\mt_2))\neq(t+1,t+1)$.}
		\medskip
		
		If $k=t+1$, then by the maximality of $\mf_1$ and $\mf_2$, we have $\mt_2\subseteq\mf_1\subseteq\mt_2$ and $\mt_1\subseteq\mf_2\subseteq\mt_1$, implying that $\mf_1=\mt_2$ and $\mf_2=\mt_1$. 
		So 
		$\tau_t(\mt_1)=\tau_t(\mt_2)=t+1$. 
		This contradicts the assumption that $(\tau_t(\mt_1),\tau_t(\mt_2))\neq(t+1,t+1)$. 
		Consequently $k\ge t+2$. 
		
			For $i\in\{1,2\}$ and $A\in{V\b t}$ with $(\mt_i)_A\neq\emptyset$, let $E$ be the space spanned by the union of all memebers of $(\mt_i)_A$.
		For $S_0\in(\mt_i)_A$ and $F_0\in\mf_i\bs(\mf_i)_A$, we have
		$\dim(S_0\cap F_0)=t$ and $\dim(A\cap F_0)=t-1$. 
		Then 	$k+1=\dim(A+F_0)\le\dim(S_0+F_0)=k+1.$ We further get
		\begin{equation}\label{xingz}
			A+F_0=S_0+F_0=E+F_0,\quad\dim(A+F_0)=k+1.
		\end{equation}
		 Therefore
		\begin{equation}\label{ta}
			t+1\le\dim E\le k+1,\quad|(\mt_i)_A|\le{\dim E-t\b1}\le{k-t+1\b1}.
		\end{equation}	
		
		\noindent{{\bf Case 1.1.} $(\tau_t(\mt_1),\tau_t(\mt_2))=(t,t)$.}
	\medskip
	
	Let $M$ and $L$ be the spaces spanned by the union of all members of $\mt_1$ and $\mt_2$, respectively, and $m=\dim M$, $\ell=\dim L$. 
	By (\ref{ta}), we have $m,\ell\in\{t+1,t+2,\dots,k+1\}$.

	Suppose $(m,\ell)\neq(k+1,k+1)$. 
	We may assume  $m\le k+1$ and $\ell\le k$. 
	Since $\tau_t(\mt_2)=t$, by (\ref{ta}), we obtain  $|\mt_2|\le{\ell-t\b1}\le{k-t\b1}$. 
	Then $$|\ma_1|\le{k-t\b 1}{n-t-1\b k-t-1}.$$ 
	This together with (\ref{fj}) and Lemma \ref{t+1} produces
	$$|\mf_1|\le\left({k-t\b1}+q^{k-3t-1}\right){n-t-1\b k-t-1}.$$
	Similarly, we derive
	$$|\mf_2|\le\left({k-t+1\b1}+q^{k-3t-1}\right){n-t-1\b k-t-1}.$$
Then by Lemma \ref{gaosifangsuo}, we have
\begin{equation*}
	\begin{aligned}
		\dfrac{|\mf_1||\mf_2|}{{k-t+1\b1}^2{n-t-1\b k-t-1}^2}
		\le&\ \dfrac{q^{k-t}-1+q^{k-3t-1}(q-1)}{q^{k-t+1}-1}\cdot\left(1+\dfrac{q^{k-3t-1}(q-1)}{q^{k-t+1}-1}\right)\\
		\le&\left(\dfrac{1}{q}+\dfrac{1}{q^{2t+1}}\right)\left(1+\dfrac{1}{q^{2t+1}}\right)\le\dfrac{5}{8}\cdot\dfrac{9}{8}<\dfrac{383^2}{384^2}.
	\end{aligned}
\end{equation*}
This together with  Lemma \ref{bidaxiao} produces $|\mf_1||\mf_2|<(h_1(n,k,t))^2$, which contradicts (\ref{md}). 
Therefore $m=\ell=k+1$.

Suppose  that $t$-subspaces $T_1$ and $T_2$ are $t$-covers of $\mt_1$ and $\mt_2$, respectively. 
Assume $T_1\neq T_2$. 
Observe that $|\mt_1|\ge2$. 
Then $T_2\not\subseteq S_1$ for some $S_1\in\mt_1$. 
Note that $\mt_1$ and $\mt_2$ are cross $t$-intersecting. 
It is routine to check that $\dim(T_2\cap S_1)=t-1$. 
For $S_2\in\mt_2$, since $\dim(S_2\cap S_1)\ge t$, we have $S_2\subseteq T_2+S_1$. Then $k+1=\ell\le\dim(T_2+S_1)=t+2$, which contradicts the fact that $k\ge t+2$. 
Thus $T_1=T_2\in{L\cap M\b t}$.

Let $F_1\in\mf_1\bs(\mf_1)_{T_1}$ and $F_2\in\mf_2\bs(\mf_2)_{T_1}$. 
We have $\dim(F_1\cap M)=\dim(F_2\cap L)=k$ by (\ref{xingz}). 
Therefore 
$$\mf_1\bs(\mf_1)_{T_1}\subseteq\left\{F\in{M\b k}: T_1\not\subseteq F\right\},\quad\mf_2\bs(\mf_2)_{T_1}\subseteq\left\{F\in{L\b k}: T_1\not\subseteq F\right\}.$$  
For $F_3\in(\mf_1)_{T_1}$, by $\dim(F_2\cap F_3)\ge t$ and $T_1\not\subseteq F_2\subseteq L$, we have $\dim(F_3\cap L)\ge t+1$. 
So
$$(\mf_1)_{T_1}\subseteq\left\{F\in{V\b k}: T_1\subseteq F,\ \dim(F\cap L)\ge t+1\right\}.$$
Similarly, we derive that
$$(\mf_2)_{T_1}\subseteq\left\{F\in{V\b k}: T_1\subseteq F,\ \dim(F\cap M)\ge t+1\right\}.$$
Suppose that $\dim(L\cap M)\le t+1$. 
Then $\{F\in{M\b k}: T_1\not\subseteq F\}$ and $\{F\in{L\b k}: T_1\not\subseteq F\}$ are not cross $t$-intersecting.
Consequently for some $i\in\{1,2\}$, 
$$|\mf_i\bs(\mf_i)_{T_1}|<\left|\left\{F\in{M\b k}: T_1\not\subseteq F\right\}\right|.$$ 
Then $|\mf_1||\mf_2|<(h_1(n,k,t))^2$, which contradicts (\ref{md}). 
So $\dim(L\cap M)\ge t+2$. 

Notice that $\mh_1(T_1,L,M)$, $\mh_1(T_1,M,L)$ are cross $t$-intersecting and $\mf_1\subseteq\mh_1(T_1,L,M)$, $\mf_2\subseteq\mh_1(T_1,M,L)$.  
By the maximality of $\mf_1$ and $\mf_2$, 
we have 
$\mf_1=\mh_1(T_1,L,M)$ and $\mf_2=\mh_1(T_1,M,L)$.


	\medskip
		\noindent{{\bf Case 1.2.} $(\tau_t(\mt_1),\tau_t(\mt_2))\neq(t,t)$.}
		\medskip

W.l.o.g., assume that $(\tau_t(\mt_1),\tau_t(\mt_2))=(t+1,t)$. 
Suppose $\mt_2=\{S\}$. 
Since $\mt_1$ and $\mt_2$ are cross $t$-intersecting, we have $\mt_1=\bigcup_{T\in{S\b t}}(\mt_1)_T$. 
For each $T\in{S\b t}$,  we have $|(\mt_1)_T|\le{k-t+1\b1}$ by (\ref{ta}). 
Then $|\mt_1|\le{t+1\b1}{k-t+1\b1}$. 
We further get $$|\ma_2|\le{t+1\b1}{k-t+1\b1}{n-t-1\b k-t-1}.$$ 
This together with (\ref{fj}) and Lemma \ref{t+1} yields
	$$|\mf_2|\le\left({t+1\b1}{k-t+1\b1}+q^{k-3t-1}\right){n-t-1\b k-t-1}.$$
Since $|\mt_2|=1$, we have $|\ma_1|\le{n-t-1\b k-t-1}$. This together with (\ref{fj}) and Lemma \ref{t+1} produces
$$|\mf_1|\le\left(1+q^{k-3t-1}\right){n-t-1\b k-t-1}.$$
Then by Lemma \ref{gaosifangsuo}, we have
	\begin{equation}\label{a1}
	\begin{aligned}
		\dfrac{|\mf_1||\mf_2|}{{k-t+1\b1}^2{n-t-1\b k-t-1}^2}
		\le&\left(1+q^{k-3t-1}\right)\left(\dfrac{q^{t+1}-1}{q^{k-t+1}-1}+\dfrac{q^{k-3t-1}(q-1)^2}{(q^{k-t+1}-1)^2}\right)\\
		<&\left(1+q^{k-3t-1}\right)\left(\dfrac{1}{q^{k-2t}}+\dfrac{1}{q^{k+t+1}}\right)\\
		=&\ \dfrac{1}{q^{k-2t}}+\dfrac{1}{q^{t+1}}+\dfrac{1}{q^{k+t+1}}+\dfrac{1}{q^{4t+2}}\le\dfrac{51}{64}<\dfrac{383^2}{384^2}
	\end{aligned}
\end{equation}
	for $k\ge2t+1$, and 
\begin{equation}\label{a2}
	\begin{aligned}
		\dfrac{|\mf_1||\mf_2|}{{t+2\b1}^2{n-t-1\b k-t-1}^2}
		\le&\left(1+\dfrac{1}{q^{t+1}}\right)\left(\dfrac{(q^{t+1}-1)^2}{(q^{t+2}-1)^2}+\dfrac{(q-1)^2}{q^{t+1}(q^{t+2}-1)^2}\right)\\
		\le&\left(1+\dfrac{1}{q^{t+1}}\right)\left(\dfrac{1}{q^2}+\dfrac{1}{q^{3t+3}}\right)\le\dfrac{85}{256}<\dfrac{1023^2}{1024^2}
	\end{aligned}
\end{equation}
for $k\le2t$. 
By (\ref{a1}), (\ref{a2}) and Lemma \ref{bidaxiao}, we get $|\mf_1||\mf_2|<\max_{i\in\{1,2\}}(h_i(n,k,t))^2$. 
This contradicts (\ref{md}). So $|\mt_2|\ge2$. 

Notice that $\tau_t(\mt_2)=t$. Let $S_1$ and $S_2$ be members of $\mt_2$ with $\dim(S_1\cap S_2)=t$.
Since $\mt_1$, $\mt_2$ are cross $t$-intersecting and $\tau_t(\mt_1)=t+1$, 
there exists $U\in\mt_1$ with $S_1\cap S_2\not\subseteq U$.  
By $\dim(U\cap S_1)\ge t$ and $\dim(U\cap S_2)\ge t$, we have $\dim(U\cap(S_1\cap S_2))=t-1$ and $U\subseteq S_1+S_2$. 
Thus 
$$\mt_1\bs(\mt_1)_{S_1\cap S_2}\subseteq\left\{U\in{S_1+S_2\b t+1}: S_1\cap S_2\not\subseteq U\right\}.$$
Notice that $\dim(S_1+S_2)=t+2$. 
We obtain $$|\mt_1\bs(\mt_1)_{S_1\cap S_2}|\le{t+2\b1}-{2\b1}=q^2{t\b1}.$$
By (\ref{ta}), we have $|(\mt_1)_{S_1\cap S_2}|\le{k-t+1\b1}$. 
Therefore 
$$|\mt_1|=|(\mt_1)_{S_1\cap S_2}|+|\mt_1\bs(\mt_1)_{S_1\cap S_2}|\le{k-t+1\b1}+q^2{t\b1}.$$
This together with (\ref{fj}) and Lemma \ref{t+1} produces
$$|\mf_2|\le\left({k-t+1\b1}+q^2{t\b1}+q^{k-3t-1}\right){n-t-1\b k-t-1}.$$
For each $S_3\in\mt_2$, since $S_1\cap S_2\subseteq S_3$, $\dim(S_3\cap U)\ge t$ and $S_1\cap S_2\not\subseteq U$,  
we have $S_3\subseteq(S_1\cap S_2)+U$. Observe that $\dim((S_1\cap S_2)+U)=t+2$. We further get $|\mt_2|\le{2\b1}$. 
This together with (\ref{fj}) and Lemma \ref{t+1} yields
\begin{equation*}
	\begin{aligned}
|\mf_1|\le\left({2\b1}+q^{k-3t-1}\right){n-t-1\b k-t-1}.
	\end{aligned}
\end{equation*}
Then by Lemma \ref{gaosifangsuo} and $\frac{q^2-1}{q^{t+2}-1}\le\frac{q^2-1}{q^3-1}\le\frac{3}{7}$, we have
\begin{equation}\label{b1}
	\begin{aligned}
	\dfrac{|\mf_1||\mf_2|}{{k-t+1\b1}^2{n-t-1\b k-t-1}^2}
	\le&\left(\dfrac{q^2-1}{q^{k-t+1}-1}+\dfrac{1}{q^{2t+1}}\right)\left(1+\dfrac{q^2(q^t-1)}{q^{k-t+1}-1}+\dfrac{1}{q^{2t+1}}\right)\\
	\le&\left(\dfrac{q^2-1}{q^{t+3}-1}+\dfrac{1}{q^{2t+1}}\right)\left(1+\dfrac{q^2(q^t-1)}{q^{t+3}-1}+\dfrac{1}{q^{2t+1}}\right)\\
		\le&\left(\dfrac{1}{q^{t+1}}+\dfrac{1}{q^{2t+1}}\right)\left(1+\dfrac{1}{q}+\dfrac{1}{q^{2t+1}}\right)
	\le\dfrac{39}{64}<\dfrac{383^2}{384^2}
	\end{aligned}
\end{equation}
for $k\ge 2t+2$, and 
\begin{equation}\label{b2}
	\begin{aligned}
		\dfrac{|\mf_1||\mf_2|}{{t+2\b1}^2{n-t-1\b k-t-1}^2}
		\le&\left(\dfrac{q^2-1}{q^{t+2}-1}+\dfrac{1}{q^{2t+1}}\right)\left(1+\dfrac{q^2(q^t-1)}{q^{t+2}-1}+\dfrac{1}{q^{2t+1}}\right)\\
		\le&\left(\dfrac{q^2-1}{q^{t+2}-1}+\dfrac{1}{8}\right)\left(\dfrac{17}{8}-\dfrac{q^2-1}{q^{t+2}-1}\right)\le\dfrac{2945}{3136}<\dfrac{1023^2}{1024^2}
	\end{aligned}
\end{equation}
for $k\le2t+1$. 
By (\ref{b1}), (\ref{b2}) and Lemma \ref{bidaxiao}, we obtain $|\mf_1||\mf_2|<\max_{i\in\{1,2\}}(h_i(n,k,t))^2$. 
This contradicts (\ref{md}).

\medskip

\noindent{{\bf Case 2.} $(\tau_t(\mt_1),\tau_t(\mt_2))=(t+1,t+1)$.}

\medskip
\noindent{{\bf Case 2.1.} $\mt_1$ is $t$-intersecting.}
\medskip

By assumption and \cite[Remark (\ro2) in Section 9.3]{BCN}, there exists $Z\in{V\b t+2}$ such that $\mt_1\subseteq{Z\b t+1}$. 
If there exists $S\in\mt_2$ with $\dim(S\cap Z)\le t$, 
then $$\dim(S'\cap S)=\dim(S'\cap(S\cap Z))\le t-1$$ for some $S'\in\mt_1$. This contradicts the fact that $\mt_1$ and $\mt_2$ are cross $t$-intersecting. Therefore, each member of $\mt_2$ is also a subspace of $Z$, i.e.,  $\mt_2\subseteq{Z\b t+1}$.

If $\dim(H\cap Z)=t$ for some $H\in\mf_1$, then each member of $\mt_1$ contains $H\cap Z$. This contradicts $\tau_t(\mt_1)=t+1$. 
So $\dim(F\cap Z)\ge t+1$ for each $F\in\mf_1$, i.e., $\mf_1\subseteq\mh_2(Z)$. 
Similarly, we have $\mf_2\subseteq\mh_2(Z)$. 
Notice that  $\mh_2(Z)$ is $t$-intersecting. 
By the maximality of $\mf_1$ and $\mf_2$, we have $\mf_1=\mf_2=\mh_2(Z)$.

\medskip
\noindent{{\bf Case 2.2.} $\mt_1$ is not $t$-intersecting.}
\medskip

We first show  upper bounds of $|\mt_1|$ and $|\mt_2|$. 
Let $U_1,U_2\in\mt_1$ with $\dim(U_1\cap U_2)<t$, and $S\in\mt_2$. 
Note that $\dim(U_i\cap S)\ge t$ for each $i$. We have
$$t+1=\dim S\ge \dim(U_1\cap S)+\dim(U_2\cap S)-\dim((U_1\cap U_2)\cap S)\ge2t-\dim((U_1\cap U_2)\cap S).$$
Hence $$t-1\le\dim((U_1\cap U_2)\cap S)\le\dim(U_1\cap U_2)\le t-1,$$ 
which implies that 
$$\dim(U_1\cap U_2)=t-1,\quad U_1\cap U_2=(U_1\cap U_2)\cap S\subseteq S.$$
We further obtain $\dim(U_1\cap S)=\dim(U_2\cap S)=t$ and $S=(U_1\cap S)+(U_2\cap S)$. 
So $$\mt_2\subseteq\left\{W_1+W_2: W_i\in{U_i\b t},\ U_1\cap U_2\subseteq W_i,\ i=1,2\right\}.$$ 
Then $$|\mt_2|\le{(t+1)-(t-1)\b1}^2={2\b1}^2.$$
If $\mt_2$ is $t$-intersecting, then by discussion in Case 2.1, $\mt_1$ is also $t$-intersecting, a contradiction.  
Thus $\mt_2$ is not $t$-intersecting. 
Similarly, we derive $|\mt_1|\le{2\b1}^2$.

Assume that $t=1$ and $|\mt_1|\ge3$. Note that $U_1\cap U_2=\{0\}$. 
Since $\mt_1$ and $\mt_2$ are cross $1$-intersecting, we have $\mt_2\subseteq{U_1+U_2\b2}$. 
Observe that there exist $Y_1,Y_2\in\mt_2$ with $Y_1\cap Y_2=\{0\}$. 
Then $\mt_1\subseteq{Y_1+Y_2\b2}\subseteq{U_1+U_2\b2}$.  

Suppose that there exists $U_3\in\mt_1$ with $U_3\cap U_1=U_3\cap U_2=\{0\}$, 
and $U_i$ is spanned by vectors $\alpha_i$ and $\beta_i$ ($i=1,2$). 
Note that $U_3\in{U_1+U_2\b2}$. 
Then $U_3$ is spanned by vectors $a_i\alpha_1+b_i\beta_1+c_i\alpha_2+d_i\beta_2$, where $a_i,b_i,c_i,d_i\in\ff$ ($i=1,2$). 
If $a_1b_2-a_2b_1=0$, then $\dim(U_3\cap U_2)\ge1$, a contradiction. 
Thus $a_1b_2-a_2b_1\neq0$.
We may assume $a_1=b_2=1$ and $a_2=b_1=0$. 
Recall that $S\in\mt_2$. 
Since $\dim(S\cap U_1)=\dim(S\cap U_2)=1$, 
 $S$ is spanned by vectors $r_1\alpha_1+s_1\beta_1$ and $r_2\alpha_2+s_2\beta_2$ for some  $(r_1,s_1),(r_2,s_2)\in\ff^2\bs\{(0,0)\}$. 
Note that $\dim(S\cap U_3)\ge1$. 
There exists $u\in\ff^*$ such that $$r_1\alpha_1+s_1\beta_1+u(r_2\alpha_2+s_2\beta_2)=r_1(\alpha_1+c_1\alpha_2+d_1\beta_2)+s_1(\beta_1+c_2\alpha_2+d_2\beta_2)\in S\cap U_3.$$ 
We further get $r_2=u^{-1}(r_1c_1+s_1c_2)$ and $s_2=u^{-1}(r_1d_1+s_1d_2)$, which imply that $S$ is spanned by $r_1\alpha_1+s_1\beta_1$ and $(r_1c_1+s_1c_2)\alpha_2+(r_1d_1+s_1d_2)\beta_2$. 
Hence 
$$|\mt_2|\le|\ff^2\bs\{(0,0)\}|=q^2-1.$$


Next, w.l.o.g., suppose $\dim(U_4\cap U_1)=1$ for some $U_4\in\mt_1$. 
Note that $\dim(U_1+U_4)=3$. 
This together with $U_4\subseteq U_1+U_2$ and $U_1\cap U_2=\{0\}$ yields $\dim(U_2\cap(U_1+U_4))=1$. 
If $U_1\cap U_4\not\subseteq S$, then $S\subseteq U_1+U_4$ and $S=W+(U_2\cap(U_1+U_4))$ for some $W\in{U_1\b 1}\bs\{U_1\cap U_4\}$. 
Consequently $$|\mt_2\bs(\mt_2)_{U_1\cap U_4}|\le{2\b1}-1=q.$$ 
If $U_1\cap U_4\subseteq S$, then by $\dim(S\cap U_2)=1$, we obtain $|(\mt_2)_{U_1\cap U_4}|\le{2\b1}$. 
Therefore $$|\mt_2|=|(\mt_2)_{U_1\cap U_4}|+|\mt_2\bs(\mt_2)_{U_1\cap U_4}|\le q+{2\b1}=2q+1.$$
Now we have $|\mt_2|\le\max\{q^2-1,2q+1\}$. 
Similarly, since $\mt_2$ is not $t$-initersecting, when $t=1$ and $|\mt_2|\ge3$, we have $|\mt_1|\le\max\{q^2-1,2q+1\}$.  

In a word, for each $i$, we have
\begin{equation}\label{t1t2}
	|\mt_i|\le\left\{
	\begin{aligned}
		&\max\{q^2-1,2q+1\}\le{2\b1}^2,& &\mbox{if\ }t=1\ \mbox{and\ }|\mt_{3-i}|\ge3,\\
		&{2\b1}^2,& &\mbox{otherwise}.
	\end{aligned}
	\right.
\end{equation}
If $k=t+1$, we have known that $\mf_1=\mt_2$ and $\mf_2=\mt_1$. Then by (\ref{t1t2}),
\begin{equation*}
	|\mf_1||\mf_2|\le\left\{
	\begin{aligned}
		&\max\left\{(q^2-1)^2,(2q+1)^2,2{2\b1}^2\right\}<(h_2(n,2,1))^2,& &\mbox{if\ }t=1,\\
		&{2\b1}^4<{4\b1}^2\le{t+2\b1}^2=(h_2(n,t+1,t))^2,& &\mbox{if\ }t\ge2.
	\end{aligned}
	\right.
\end{equation*}
This contradicts (\ref{md}). So $k\ge t+2$. 

By (\ref{fj}) and Lemma \ref{t+1}, we have
\begin{equation}\label{f1f2}
	\begin{aligned}
		|\mf_1|\le\left(|\mt_2|+q^{k-3t-1}\right){n-t-1\b k-t-1},\quad|\mf_2|\le\left(|\mt_1|+q^{k-3t-1}\right){n-t-1\b k-t-1}.
	\end{aligned}
\end{equation}
Assume $k\le2t+1$. Then
\begin{equation*}
	\begin{aligned}
		\dfrac{|\mf_1||\mf_2|}{{n-t-1\b k-t-1}^2}
		\le|\mt_1||\mt_2|+q^{-t}(|\mt_1|+|\mt_2|)+q^{-2t}.
	\end{aligned}
\end{equation*}
Suppose $t=1$ and $|\mt_1|,|\mt_2|\ge3$.
If $q=2$, then by  (\ref{t1t2}) and Lemma \ref{bidaxiao}, we have
\begin{equation}\label{c1}
		\dfrac{(h_2(n,k,1))^2-|\mf_1||\mf_2|}{{n-2\b k-2}^2}\ge\dfrac{1023^2}{1024^2}{3\b1}^2-5^2-5-\dfrac{1}{2^{2}}>0.
\end{equation}
If $q\ge3$, then by (\ref{t1t2}), $n\ge2k+2t+5$, Lemma \ref{gaosifangsuo} and
$$h_2(n,k,1)={3\b1}{n-2\b k-2}-q{2\b1}{n-3\b k-3},$$ we obtain
\begin{equation}\label{c2}
	\begin{aligned}
		\dfrac{(h_2(n,k,1))^2-|\mf_1||\mf_2|}{{n-2\b k-2}^2}
		\ge&\left({3\b1}-\dfrac{q(q^{k-2}-1){2\b1}}{q^{n-2}-1}\right)^2-(q^2-1)^2-\dfrac{2(q^2-1)}{q}-\dfrac{1}{q^2}\\
		\ge&\left({3\b1}-\dfrac{q(q+1)}{q^{10}}\right)^2-\left(q^2-1+\dfrac{1}{q^2}\right)^2>0.
	\end{aligned}
\end{equation}
Suppose $t=1$ and $|\mt_i|\le2$ for some $i\in\{1,2\}$. By $\frac{q^2-1}{q^3-1}\le\frac{3}{7}$, (\ref{t1t2}) and Lemma \ref{gaosifangsuo}, we get
\begin{equation}\label{c3}
	\begin{aligned}
		\dfrac{|\mf_1||\mf_2|}{{3\b1}^2{n-2\b k-2}^2}\le&\dfrac{2{2\b1}^2}{{3\b1}^2}+\dfrac{2+{2\b1}^2}{q{3\b1}^2}+\dfrac{1}{q^2{3\b1}^2}
		\le\dfrac{18}{49}+\dfrac{1}{49}+\dfrac{9}{98}+\dfrac{1}{196}<\dfrac{1023^2}{1024^2}.
	\end{aligned}
\end{equation}
Suppose $t\ge2$, then by  (\ref{t1t2}) and Lemma \ref{gaosifangsuo}  , we have
\begin{equation}\label{c4}
	\begin{aligned}
		\dfrac{|\mf_1||\mf_2|}{{t+2\b1}^2{n-t-1\b k-t-1}^2}\le&\left(\dfrac{{2\b1}^2}{{t+2\b1}}\right)^2+\dfrac{2}{q^t}\left(\dfrac{{2\b1}}{{t+2\b1}}\right)^2+\dfrac{1}{(q^{t}{t+2\b1})^2}\\
		\le&\ \dfrac{(q^2-1)^4}{(q-1)^2(q^{4}-1)^2}+\dfrac{2(q^2-1)^2}{q^t(q^{t+2}-1)^2}+\dfrac{(q-1)^2}{q^{2t}(q^{t+2}-1)^2}\\
		\le&\ \dfrac{(q+1)^2}{(q^{2}+1)^2}+\dfrac{1}{q^{3t-1}}+\dfrac{1}{q^{4t+2}}\le\ \dfrac{10041}{25600}<\ \dfrac{1023^2}{1024^2}.
	\end{aligned}
\end{equation}
Now assume $k\ge2t+2$. Note that $|\mt_1||\mt_2|\le{t+2\b1}^2$ by (\ref{t1t2}). This together with  (\ref{t1t2}), (\ref{f1f2}) and Lemma \ref{gaosifangsuo} yields
\begin{equation}\label{c5}
	\begin{aligned}
		\dfrac{|\mf_1||\mf_2|}{{k-t+1\b1}^2{n-t-1\b k-t-1}^2}
		\le&\ \dfrac{|\mt_1||\mt_2|+q^{k-3t-1}(|\mt_1|+|\mt_2|)+q^{2k-6t-2}}{{k-t+1\b1}^2}\\
		\le&\ \dfrac{(q^{t+2}-1)^2}{(q^{t+3}-1)^2}+\dfrac{2q^{k-3t-1}(q^2-1)^2}{(q^{k-t+1}-1)^2}+\dfrac{q^{2k-6t-2}(q-1)^2}{(q^{k-t+1}-1)^2}\\
		\le&\ \dfrac{1}{q^2}+\dfrac{1}{q^{k+t-2}}+\dfrac{1}{q^{4t+2}}\le\dfrac{25}{64}<\dfrac{383^2}{384^2}.
	\end{aligned}
\end{equation}
By (\ref{c1})--(\ref{c5}) and Lemma \ref{bidaxiao}, we obtain $|\mf_1||\mf_2|<\max_{i\in\{1,2\}}(h_i(n,k,t))^2$. 
This contradicts (\ref{md}). 

In summary, assuming $k=t+1$, we have $\mf_1=\mf_2=\mh_2(Z)$ for some $Z\in{V\b t+2}$; assuming $k\ge t+2$, we have 
$\mf_1=\mh_1(T,L,M)$ and $\mf_2=\mh_1(T,M,L)$ for some $T\in{V\b t}$ and $L,M\in{V\b k+1}$ with $T\subseteq L\cap M$ and $\dim(L\cap M)\ge t+2$, or $\mf_1=\mf_2=\mh_2(Z)$ for some $Z\in{V\b t+2}$.

When $k\ge t+2$, by \cite[Lemmas 2.6-2.9]{VHM2}, we have 
$(h_1(n,k,t))^2>(h_2(n,k,t))^2$ if $k>2t+1$; $(h_1(n,k,t))^2\le (h_2(n,k,t))^2$ if $k\le2t+1$, and equality holds if and only if $(k,t)=(3,1)$. 
 This finishes the proof of Theorem \ref{main1}.

\subsection{Proof of Theorem \ref{main2}}

We may suppose that $\mf_1$, $\mf_2$, \dots, $\mf_r$ are non-empty $r$-cross $t$-intersecting families of $k$-subspaces of $V$ with $\tau_t(\mf_i)\ge t+1$ for each $i$, and $\prod_{i=1}^r|\mf_i|$ takes the maximum value. 
	
	Let $d\in\{1,2,\dots,r\}$. Assume that there exists a subspace $F_0$ of $V$ with $\dim(\bigcap_{j=0}^dF_j)\ge t$ for any $F_i\in\mf_i$,  $1\le i\le d$. 
	For $m\in\{0,1,2,\dots,d-1\}$, suppose that there exist $F'_1\in\mf_1$, \dots, $F'_m\in\mf_m$ such that
	 $\dim F_0\ge\dim(\bigcap_{j=0}^mF'_j)+m\ge t+m$, where $F'_0=F_0$. 
	Since $\tau_t(\mf_{m+1})\ge t+1$, there exists $F'_{{m+1}}\in\mf_{{m+1}}$ not containing $\bigcap_{j=0}^mF'_{{j}}$, 
	which implies that 
	$$\dim F_{{0}}\ge\dim\left(\bigcap_{j=0}^{m}F'_{{j}}\right)+m\ge\dim\left(\bigcap_{j=0}^{m+1}F'_{{j}}\right)+1+m\ge t+m+1.$$
	Note that $\dim F_{{0}}\ge t$. By induction, we get $\dim F_{0}\ge t+d$.

	Set $d=r-2$ and $F_0=A\cap B$, where $A\in\mf_{r-1}$ and $B\in\mf_{r}$. Then $\dim(A\cap B)\ge t+r-2$.  
	Hence $\mf_{r-1}$ and $\mf_r$ are cross $(t+r-2)$-intersecting. 
	We further conclude that $\mf_i$ and $\mf_j$ are cross $(t+r-2)$-intersecting for distinct $i,j$. 
	
	 If $\tau_{t+r-2}(\mf_{\ell})=t+r-2$ for some $\ell$, then there exists a $t$-subspace contained in each member of $\mf_\ell$, a contradiction to $\tau_t(\mf_\ell)\ge t+1$. 
	Thus $\tau_{t+r-2}(\mf_i)\ge t+r-1$ for each $i$. 
	For distinct $i,j\in\{1,2,\dots,r\}$, since $k\ge t+r-1$ and $n\ge2k+2t+2r+1$, by Theorem \ref{main1}, we get
	\begin{equation}\label{ij}
	|\mf_i||\mf_j|\le\max\{(h_1(n,k,t+r-2))^2,(h_2(n,k,t+r-2))^2\},
	\end{equation} which implies that 
$$\prod_{i=1}^r|\mf_i|\le\max\{(h_1(n,k,t+r-2))^r,(h_2(n,k,t+r-2))^r\}.$$  
We may suppose the equality holds. 
Then the equality in (\ref{ij}) holds for distinct $i,j$. 

Suppose $k>2t+2r-3$. By Theorem \ref{main1}, we have $\mf_1=\mh_1(T,L,M)$, then $\mf_i=\mh_1(T,M,L)$ ($i\in\{2,3,\dots,r\}$) 
for some $T\in{V\b t}$ and $M,L\in{V\b k+1}$ with $T\subseteq L\cap M$ and $\dim(L\cap M)\ge t+r$. 
  Note that $\mf_2$ and $\mf_3$ are cross $(t+r-2)$-intersecting. Then $\mh_1(T,M,L)$ is $(t+r-2)$-intersecting. 
If $p=\dim(M\cap L)\le k$, then since $n\ge2k+2t+2r+1$, there exists $Y\in\mh_1(T,M,L)$ with 
$$T\subseteq Y,\quad\dim(Y\cap M)=(t+r-2)+(k+1-p),\quad Y\cap L=T.$$ 
For $Y'\in{L\b k}$ with $T\not\subseteq Y'$, we have $\dim(Y\cap Y')=\dim((Y\cap L)\cap Y')<t+r-2$. 
This contradicts the fact that $\mh_1(T,M,L)$ is $(t+r-2)$-intersecting. 
So $M=L$, implying that $\mf_i=\mh_1(T,M,M)$ for each $i$. 
On the other hand, if $k\le2t+2r-3$, then by Theorem \ref{main1}, we have $\mf_1=\mf_i=\mh_2(Z)$ ($i\in\{2,3,\dots,r\}$)  for some $Z\in{V\b t+r}$.

To finish our proof, it is sufficient to show that neither $\dim(\bigcap_{i=1}^rA_i)$ nor $\dim(\bigcap_{i=1}^rB_i)$ is less than $t$ for any $A_i\in\mh_1(T,M,M)$ and $B_i\in\mh_2(Z)$, $1\le i\le r$. By \cite[Lemma 4.7]{CAO}, the desired result follows.

\medskip
\noindent{\bf Remark.} Set $d=r-1$ and $F_0\in\mf_r$. Then $k=\dim F_0\ge t+r-1$. So $r>k-t+1$ implies that there does not exist non-empty $r$-cross $t$-intersecting families of $k$-subspaces of $V$.

\medskip
	
\noindent{\bf Acknowledgment.}	
K. Wang is supported by the National Key R\&D Program of China (No. 2020YFA0712900) and National Natural Science Foundation of China (12071039, 12131011).

\end{document}